\theoremstyle{plain}
\newtheorem{theorem}{Theorem}[section]
\newtheorem{lemma}[theorem]{Lemma}
\newtheorem{corollary}[theorem]{Corollary}
\newtheorem{proposition}[theorem]{Proposition}
\theoremstyle{definition}
\theoremstyle{remark}
\newtheorem{remark}[theorem]{Remark}
\newcommand{\Z}{\mathbb{Z}}
\newcommand{\CC}{\mathcal{C}}
\newcommand{\del}{\partial}
\newcommand{\bdel}{\bar{\partial}}
\begin{document}

\title[Chain complex for normal surfaces]{A chain complex and
Quadrilaterals for normal surfaces}

\author{Siddhartha Gadgil}

\address{   Department of Mathematics\\
        Indian Institute of Science\\
        Bangalore 560012, India}
\email{gadgil@math.iisc.ernet.in}

\author{Tejas Kalelkar}
\address{  Stat-math Unit\\
          Indian Statistical Institute\\
       Bangalore 560059, India}
\email{tejas@isibang.ac.in}

\date{\today}

\subjclass{57Q35}

\begin{abstract}
We interpret a normal surface in a (singular) three-manifold in terms
of the homology of a chain complex. This allows us to study the
relation between normal surfaces and their quadrilateral
co-ordinates. Specifically, we give a proof of an (unpublished) observation
independently given by Casson and Rubinstein saying that quadrilaterals determine a normal surface up to
vertex linking spheres. We also characterise the quadrilateral
coordinates that correspond to a normal surface in a (possibly ideal)
triangulation.
\end{abstract}

\maketitle

\section{Introduction}

A \emph{normal arc} in a triangle is an arc separating a vertex from
the opposite edge. Normal arcs in a triangle, up to isotopy through
normal arcs, are in bijective correspondence with vertices.  A
\emph{normal disc} in a tetrahedron is either a triangle separating a
vertex from the opposite face or a quadrilateral separating a pair of
edges. Normal triangles in a tetrahedron are determined, up to isotopy
through normal discs, by the vertex they separate from its opposite
face. Normal quadrilaterals are determined up to isotopy through
normal discs by the pair of edges they separate. Thus, normal discs in
a tetrahedron are of seven \emph{types}, i.e., isotopy classes.

Given a triangulated $3$-manifold $M$, a \emph{normal surface}
$S\subset M$ is a properly embedded surface in $M$ that intersects each tetrahedron $\Delta$ of the triangulation in a disjoint union
of normal discs. Such a normal surface is determined, up to isotopy
through normal surfaces, by the number of normal discs of each type,
i.e., by $7t$ integers called the \emph{normal coordinates}, where $t$
is the number of tetrahedra in the triangulation.

For $S$ to be a surface, these coordinates satisfy \emph{matching
equations}. Namely, if $F$ is a face contained in two tetrahedra
$\Delta_+$ and $\Delta_-$ and $D$ is a normal disc in one of the
tetrahedra $\Delta_\pm$, then $D \cap F$ is a normal arc. Thus, the normal
discs of $S \cap \Delta_\pm$ give a collection of normal arcs in $F$. As
this coincides with $S \cap F$, we see that the number of arcs in $F$
of each type obtained from the normal discs in the tetrahedra $\Delta_+$
and $\Delta_-$ must coincide.

There are two further conditions for a collection of normal
coordinates to represent an embedded normal surface. Firstly, all the
co-ordinates should be non-negative. Secondly, embeddable surfaces
cannot have quadrilaterals of two different types in a tetrahedron. We
call normal coordinates satisfying this condition on quadrilaterals as
admissible.\\

Casson and Rubinstein independently observed that a normal surface is essentially determined by its
normal quadrilaterals. More precisely, for each vertex $v$, we
consider the normal triangles in tetrahedra containing $v$ that
separate $v$ from the opposite face. The union of these form the
\emph{vertex linking sphere} $S(v)$. These clearly have no
quadrilaterals. Their (unpublished) observation was that normal
surfaces are determined up to vertex linking spheres by quadrilateral
coordinates. This allows a considerable increase in efficiency of
algorithms based on normal surfaces.

The purpose of this note is to clarify this observation, as well as
the complementary question of when a given set of quadrilateral
coordinates corresponds to a normal surface, by interpreting normal
surfaces in terms of the homology of a chain complex associated to
$M$. Our methods also allow us to address the analogous questions for
\emph{ideal triangulations}. A criterion for quadrilateral coordinates
determining a normal surface and a proof of Casson-Rubinstein's observation was
earlier given by Tollefson\cite{Tol} for compact manifolds, using geometric constructions. Tillmann \cite{Til} proves a similar result for ideal triangulations in the context of spun-normal surfaces. Spun-normal surfaces, introduced by Thurston, are the analogue of normal surfaces in ideal triangulations.

As we wish to consider ideal triangulations, we consider a context
more general than triangulated $3$-manifolds. Namely, let $M$ be an orientable
three-dimensional simplicial complex that is a manifold away from
vertices, and so that the link of each vertex $v$ is a closed,
connected, orientable surface (not necessarily a sphere). We can define
normal surfaces in this situation exactly as in the case of
$3$-manifolds.  For a detailed treatment of spun-normal surfaces we refer to \cite{Til}.

Henceforth, we assume $M$ is as above. We can associate to a vertex $v$
the vertex linking normal surface $S(v)$, which is a closed orientable
surface (but not in general a sphere). The space $\hat{M}$ obtained
from $M$ by deleting those vertices $v$ for which $S(v)$ is not a
sphere is a (non-compact in general) $3$-manifold with an ideal triangulation.

\section{The chain complex}
In this section, we associate a chain complex $(\CC,\del_*)$ to $M$ such that normal surfaces are in bijection with cycles of $\CC_2$.

Fix an orientation of $M$. For each vertex $v$, assume that $S(v)$ is oriented so that its co-orientation at each point is along a vector pointing away from $v$ (we make this precise later). As $S(v)$ is a union of normal triangles (linking $v$), we get a triangulation of $S(v)$. Let $(C_*(v),\del_*(v))$ be the simplicial chain complex associated to this triangulation. Then, we shall show that $C_2(v)$ embeds in $\CC_2$, $C_1(v)$ embeds in $\CC_1$ and the restriction of the boundary map $\del_2:\CC_2\to\CC_1$ to $C_2(v)$ agrees with $\del_2(v)$.

\subsection{The chain complex $(\CC_*, \del_*)$}

A normal arc is uniquely determined up to normal isotopy by the face in which it lies and the vertex that it links. Let $v$ be a vertex of a face $F$. We denote by $\alpha(F,v)$ the normal arc that lies in $F$ and links $v$.

We give an arbitrary orientation to the edges of the triangulation of $M$ and let $e(F, v)$ denote the edge in $F$ opposite to $v$. We orient the normal arc $\alpha(F, v)$ so that it is in the same direction as $e(F, v)$. Let $\CC_1$  be the free abelian group generated by these oriented normal arcs up to normal isotopy.

Let $\CC^t_2$ be the free abelian group generated by normal triangles (up to normal isotopy) and $\CC^q_2$ be the free abelian group generated by normal quadrilaterals (up to normal isotopy). Define $\CC_2 = \CC^t_2 \oplus \CC^q_2$ to be the free abelian group generated by normal disks (up to normal isotopy). Finally, for all $k < 1$ and $k > 2$, let $\CC_k$ be zero.

Next we define the boundary maps of $(\CC_*, \del_*)$. Take $\del_k$ to be zero for all $k \neq 2$. To define the boundary map $\del_2$, we proceed as follows.

\begin{figure}[t]
\centering
\includegraphics[scale=0.5]{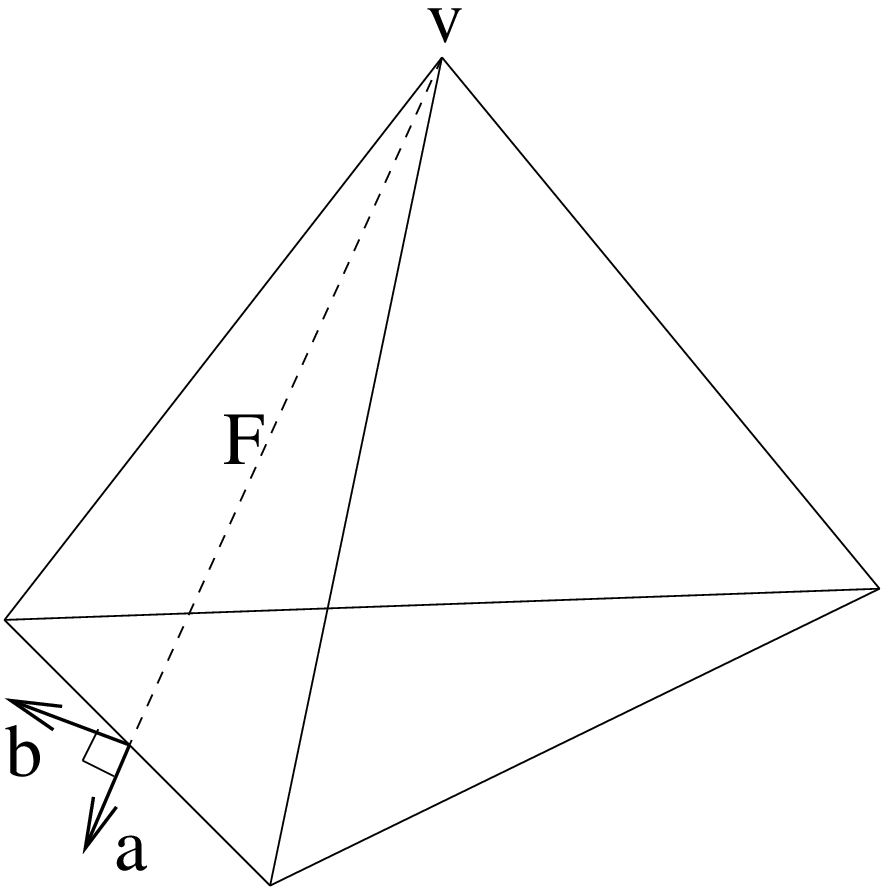}
\hspace{1cm}
\includegraphics[scale=0.5]{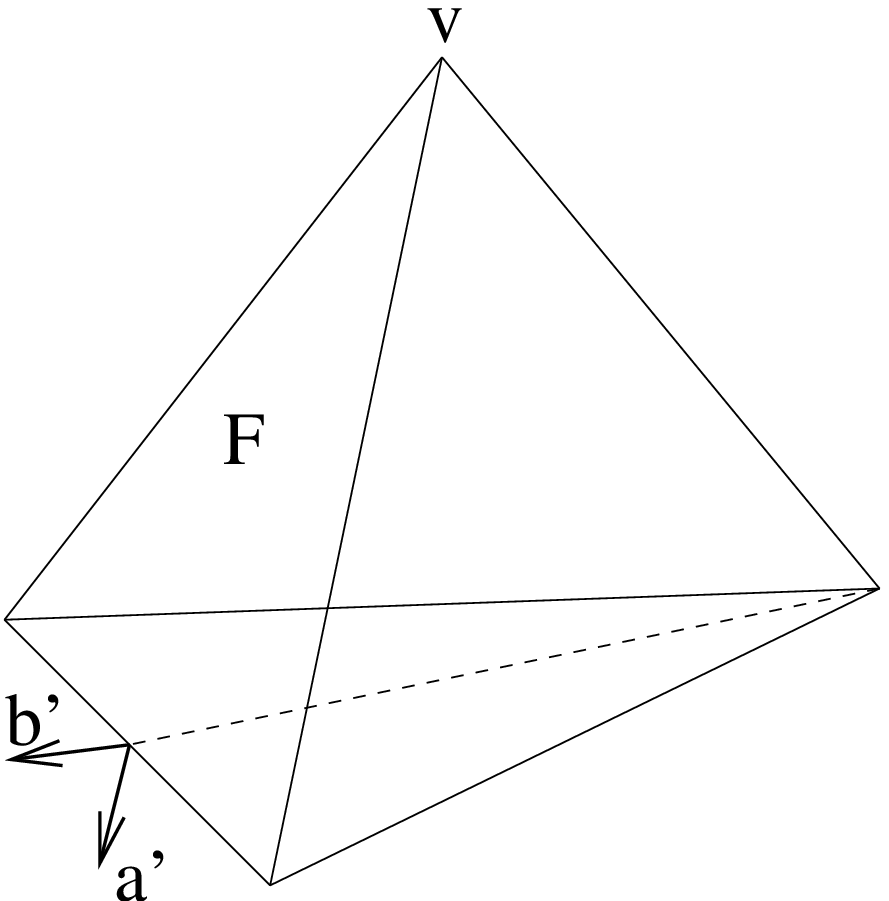}
\caption{The vectors $a$, $b$, $a'$ and $b'$}
\label{fig:Delta}
\end{figure}

Let $v$ be a vertex of a face $F$ of a tetrahedron $\Delta$ (which we identify with a unit simplex in Euclidean space respecting orientations). Let $e=e(F, v)$ denote the edge in $F$ opposite to $v$, and let $m_e(F, v)$ denote its midpoint. Let $a=a(\Delta, F, v)$ denote the unit vector based at $m_e(F, v)$ that is contained in the plane containing $F$ which is normal to $e(F,v)$  and points out of $F$. Let $b=b(\Delta, F, v)$ denote the unit vector based at $m_e(F, v)$, perpendicular to $F$ which points out of $\Delta$ (see figure~\ref{fig:Delta}). Then if $e(F,v)$ is regarded as a unit vector based at $m_e(F,v)$, there is a unique sign $\varepsilon=\varepsilon(\Delta,F,v)=\pm 1$ such that $\langle a,b, \varepsilon e\rangle$ is a  \emph{positively oriented} orthonormal basis. We define $\varepsilon(\Delta,F,v)$ to be this sign.

Observe that if $\Delta_i$, $i=1,2$, are the tetrahedra containing a face $F$, then $\varepsilon(\Delta_1,F,v)=-\varepsilon(\Delta_2,F,v)$ as we have the relations $a(\Delta_1, F,v)= a(\Delta_2, F,v)$ and $b(\Delta_1, F,v)= - b(\Delta_2,F,v)$. We denote by $\Delta_+(F)$ the tetrahedron containing $F$ such that $\varepsilon(\Delta,F,v)=1$, with the other tetrahedron containing $F$ denoted $\Delta_-(F)$.

Given a normal disk $D$ in $\Delta$, suppose that $\del D$ is the union of normal arcs $\{\alpha(F, v)\}_{(F, v) \in A}$. Recall that $\alpha(F,v)$ is oriented in the direction of $e(F, v)$. The boundary map $\del_2 (D)$ is defined to be $$\sum_{(F, v) \in A} \varepsilon(\Delta,F,v) \alpha (F,v)$$
This extends uniquely to a homomorphism $\del_2:\CC_2\to \CC_1$.

\subsection{Normal surfaces and the chain complex}
We can interpret normal surfaces in terms of the chain complex
$(\CC_*,\del_*)$ as follows.

\begin{lemma}\label{match}
There is a bijective correspondence between normal coordinates and
$2$-chains of the chain complex. Further, normal coordinates
corresponding to a $2$-chain $\xi$ satisfy the matching equations if
and only if $\del_2 \xi=0$.
\end{lemma}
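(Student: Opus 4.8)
The plan is to unwind both sides of the claimed equivalence into statements about normal arcs in faces, and then match them term by term. First I would establish the bijection between normal coordinates and $2$-chains: a $2$-chain $\xi \in \CC_2$ is, by definition, a formal integer combination $\xi = \sum_D x_D\, D$ over the seven types $D$ of normal disks in each tetrahedron, so the coefficients $x_D$ are precisely a tuple of $7t$ integers, which is what we mean by normal coordinates. This part is essentially a restatement of the definition of $\CC_2 = \CC_2^t \oplus \CC_2^q$ as a free abelian group on normal disk types, so it requires only careful bookkeeping.

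The substance is in the second assertion. I would compute $\del_2 \xi$ explicitly in the basis of $\CC_1$ and show that the vanishing of each coefficient is exactly one matching equation. Fix a face $F$ interior to $M$ (shared by $\Delta_+(F)$ and $\Delta_-(F)$, in the notation of the excerpt) and a vertex $v$ of $F$; these data index a generator $\alpha(F,v)$ of $\CC_1$. By the definition of $\del_2$ on a single disk $D$, the coefficient of $\alpha(F,v)$ in $\del_2 D$ is $\varepsilon(\Delta, F, v)$ times the number of boundary arcs of $D$ equal to $\alpha(F,v)$ — and since $D$ is a normal disk, $\del D$ meets $F$ in at most one arc of the type $\alpha(F,v)$, so this count is $0$ or $1$. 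Therefore the coefficient of $\alpha(F,v)$ in $\del_2 \xi$ is
\[
\sum_{D \subset \Delta_+(F)} x_D\,[\alpha(F,v) \subset \del D] \;-\; \sum_{D \subset \Delta_-(F)} x_D\,[\alpha(F,v) \subset \del D],
\]
where I have used the key sign relation $\varepsilon(\Delta_1,F,v) = -\varepsilon(\Delta_2,F,v)$ proved in the excerpt, and $[\,\cdot\,]$ is $1$ if the arc occurs and $0$ otherwise. The first sum counts the normal arcs of type $\alpha(F,v)$ that the surface induces on $F$ from the $\Delta_+$ side, and the second counts those induced from the $\Delta_-$ side; setting this difference to zero for every $(F,v)$ is precisely the matching equation at $F$. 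For a boundary face $F$ (lying in only one tetrahedron, which occurs when $M$ has nonempty boundary or we are in the ideal setting with a vertex deleted) there is no matching condition and correspondingly $\alpha(F,v)$ does not appear in $\del_2$; I would remark that in the closed case every face is interior so this subtlety does not arise, and note that the orientation of $\alpha(F,v)$ along $e(F,v)$ — independent of which tetrahedron we view it from — is what makes the two sums land on the same generator with opposite signs rather than on distinct generators.

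The main obstacle, such as it is, is verifying the sign/orientation compatibility cleanly: one must check that the chosen orientation of each normal arc $\alpha(F,v)$ (along the fixed orientation of $e(F,v)$) together with the sign $\varepsilon(\Delta,F,v)$ combines so that a disk on the $\Delta_+$ side and a disk on the $\Delta_-$ side contributing the same arc type do so with opposite signs; this is exactly the content of the displayed relations $a(\Delta_1,F,v) = a(\Delta_2,F,v)$, $b(\Delta_1,F,v) = -b(\Delta_2,F,v)$ from the excerpt, so I would simply invoke them. Once the coefficient formula above is in hand, the equivalence "$\del_2\xi = 0 \iff$ all matching equations hold" is immediate, since $\del_2 \xi = 0$ means every coefficient vanishes, and the coefficients are in bijection with the matching equations (one per pair $(F,v)$ with $F$ interior). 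I would close by noting that non-negativity and the quadrilateral-admissibility condition are deliberately not imposed here, so cycles of $\CC_2$ correspond to possibly-virtual normal surfaces, which is the generality needed for the homological arguments to follow.
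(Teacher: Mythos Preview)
Your proposal is correct and follows essentially the same approach as the paper: both identify the bijection as an immediate consequence of $\CC_2$ being free on normal disk types, and both compute the coefficient of $\alpha(F,v)$ in $\del_2\xi$ as the difference between the (signed) counts of disks on the $\Delta_+(F)$ and $\Delta_-(F)$ sides using the sign relation $\varepsilon(\Delta_+,F,v)=-\varepsilon(\Delta_-,F,v)$, then identify this vanishing with the matching equation at $(F,v)$. Your write-up is in fact a bit more explicit (the Iverson bracket and the sign discussion) than the paper's; note only that in the paper's setting every face is interior, so your aside about boundary faces is unnecessary (and the claim that $\alpha(F,v)$ ``does not appear in $\del_2$'' for such faces would need adjustment if boundary were actually present).
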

\begin{proof}
The first statement follows as $\CC_2$ is the free abelian group
generated by normal isotopy classes of normal discs.

Let $\xi=\sum_j c_jD_j$ be a $2$-chain. Let $F$ be the common face of tetrahedra $\Delta_+(F)$ and $\Delta_-(F)$ and let $\alpha(F,v)$ be a normal arc. By construction, the coefficient of $\alpha(F,v)$ is the difference
$$\sum_{D_i\subset \Delta_+(F)} c_i-\sum_{D_j\subset \Delta_-(F)} c_j$$

Hence the boundary of a 2-chain $\xi$ is zero if and only if for each normal arc $\alpha(F,v)$ in the face $F = \Delta_+ \cap \Delta_-$,
the number $\sum_{D_i\subset \Delta_+(F)} c_i$ of normal disks (counted with sign) of $\xi$ in $\Delta_+$ that have $\alpha$ in their boundary equals the number $\sum_{D_j\subset \Delta_-(F)} c_j$ of normal disks of $\xi$ in $\Delta_-$ having $\alpha$ in their boundary. This is precisely when $\xi$ is a solution of the matching equations. Therefore, $\del_2\xi = 0$ if and only if its normal coordinates satisfy the matching equations.
\end{proof}

Thus, as there are no three-chains, normal surfaces are in bijective
correspondence with the homology $H_2(\CC)$.

\subsection{The inclusion of chain complexes}

For a vertex $v$, the $1$-chains and $2$-chains in $C_*(v)$ naturally form subgroups of $\CC_1$ and $\CC_2$ respectively. We now see that on making the appropriate orientation conventions, the  boundary map $\del_2(v):C_2(v)\to C_1(v)$ is the restriction of the boundary map $\del_2:\CC_2\to \CC_1$.

Consider a normal triangle $D(\Delta,v)$ in the tetrahedron $\Delta$ linking the vertex $v$. We can identify this with the face $\Phi(\Delta,v)$ of $\Delta$ opposite to $v$. This is consistent with the previous identification of the normal arc $\alpha(F,v)$ with the edge $e(F,v)$.

Let $a'=a'(\Delta,v)$ be the unit vector normal to $\Phi=\Phi(\Delta,v)$ pointing out of $\Delta$ (see figure~\ref{fig:Delta}). We orient $\Phi$ by declaring a basis $\langle u, w\rangle$ of its tangent space to be positive if and only if the basis $\langle a', u, w\rangle$ is positive. With this orientation, we see that the boundary map on $\CC_2$ restricts to the boundary map on $C_2(v)$.

\begin{proposition}
For the natural inclusions $C_1(v)\hookrightarrow \CC_1$ and $C_2(v)\hookrightarrow \CC_2$, the boundary map $\del:\CC_2\to \CC_1$ restricts to the boundary map $\del_2(v):C_2(v)\to C_1(v)$.
\end{proposition}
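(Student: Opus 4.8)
The plan is to verify the claimed compatibility of boundary maps locally, tetrahedron by tetrahedron, since both $\del_2$ and $\del_2(v)$ are defined by summing local contributions over the tetrahedra meeting $v$. First I would fix a tetrahedron $\Delta$ containing $v$, and note that the normal triangle $D(\Delta,v)$ has exactly three boundary arcs, namely $\alpha(F,v)$ for the three faces $F$ of $\Delta$ incident to $v$; the opposite face $\Phi=\Phi(\Delta,v)$ contributes no boundary arc of $D(\Delta,v)$ (its intersection with $D(\Delta,v)$ under the identification is the whole of $\del\Phi$, i.e.\ the three edges $e(F,v)$). So under the identifications $D(\Delta,v)\leftrightarrow\Phi$ and $\alpha(F,v)\leftrightarrow e(F,v)$, the simplicial boundary $\del_2(v)(\Phi)$ is $\sum_F \pm e(F,v)$, with signs determined by the chosen orientation of $\Phi$ (the one making $\langle a',u,w\rangle$ positive), while $\del_2(D(\Delta,v)) = \sum_F \varepsilon(\Delta,F,v)\,\alpha(F,v)$. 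Thus everything reduces to the single sign identity, for each face $F$ of $\Delta$ incident to $v$,
\begin{equation*}
\varepsilon(\Delta,F,v) = \big[\text{the incidence sign of the edge } e(F,v) \text{ in } \del\Phi\big],
\end{equation*}
where the right-hand side is computed with $\Phi$ given the $a'$-induced orientation.

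The key step, then, is a purely linear-algebra comparison of two orthonormal frames at the midpoint $m_e(F,v)$. On one side we have $\langle a,b,\varepsilon e\rangle$ positively oriented, where $a$ lies in the plane of $F$ normal to $e$ pointing out of $F$, and $b\perp F$ points out of $\Delta$. On the other side, the simplicial incidence sign of $e=e(F,v)$ in the oriented $2$-simplex $\Phi$ is read off from whether $e$, traversed in its chosen orientation, agrees or disagrees with the boundary orientation of $\Phi$; equivalently, writing $n$ for the inward normal to the edge $e$ within the plane of $\Phi$, the sign is $+1$ precisely when $\langle n, e\rangle$ is a positive basis of the tangent plane of $\Phi$, which by the convention on $\Phi$ means $\langle a', n, e\rangle$ is positively oriented in $\R^3$. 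So I would express both $a$ and $a'$, and both the "within-$F$" and "within-$\Phi$" edge-normals, in terms of the geometry of the Euclidean tetrahedron $\Delta$, and check that the two orientation conditions coincide. The relation $b = b(\Delta,F,v)$ points out of $\Delta$ through $F$, while $a'$ points out of $\Delta$ through $\Phi$, and $a$ points out of $F$ while the $\Phi$-edge-normal $n$ points into $e$ along $\Phi$; tracking how these four outward/inward conventions interact across the shared edge $e$ should produce exactly the sign match, possibly after noting that the identification $\alpha(F,v)\leftrightarrow e(F,v)$ orients $\alpha(F,v)$ along $e(F,v)$, which is the same orientation used on both sides.

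A clean way to organise the linear algebra is to work in an orthonormal frame adapted to $e$: pick coordinates so that $e$ points along the third basis vector, $F$ lies in the half-plane spanned by the first and third basis vectors (with $a$ the first basis vector, up to sign determined by "out of $F$"), and $\Delta$ lies on one side; then $b$ is $\pm$ the second basis vector with the sign fixed by "out of $\Delta$", $\Phi$ is another half-plane through $e$, and $a'$ is its outward-$\Delta$ normal. In such a frame the positivity of $\langle a,b,\varepsilon e\rangle$ and of $\langle a', n, e\rangle$ both become explicit $3\times 3$ determinant sign computations involving the dihedral angle along $e$, and one sees they give the same $\varepsilon$. Finally I would assemble the local computations: for each tetrahedron $\Delta\ni v$ the restriction of $\del_2$ to the generator $D(\Delta,v)$ equals $\del_2(v)$ applied to $\Phi(\Delta,v)$, and since $C_2(v)$ is freely generated by the $D(\Delta,v)$ and $C_1(v)$ by the $\alpha(F,v)$ with $v\in F$, linearity gives $\del_2|_{C_2(v)} = \del_2(v)$, which is the assertion.

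The main obstacle I anticipate is bookkeeping the signs consistently: there are several independent "outward" conventions (out of $F$ inside its plane, out of $\Delta$ through $F$, out of $\Delta$ through $\Phi$), the edges carry an arbitrary fixed orientation that must cancel out identically on both sides, and the definition of $\varepsilon(\Delta,F,v)$ is tied to the ambient orientation of $M$ while $\del_2(v)$ is tied to the auxiliary orientation of $\Phi$. The real content is checking that the $\Phi$-orientation convention ("$\langle a',u,w\rangle$ positive") was chosen precisely so that these match — so the proof is less a computation than a verification that the earlier orientation choices were set up coherently; once the adapted frame is in place the determinant signs are routine.
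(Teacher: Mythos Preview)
Your reduction is exactly the paper's: it suffices to fix $\Delta\ni v$ and a face $F$ of $\Delta$ through $v$, and to show that the incidence sign of $e=e(F,v)$ in $\partial\Phi$ (for $\Phi=\Phi(\Delta,v)$ with the $a'$-induced orientation) equals $\varepsilon(\Delta,F,v)$. Where you diverge is in the endgame. You propose to set up coordinates adapted to $e$ and compare two $3\times 3$ determinants involving the dihedral angle along $e$; this would work, but the paper avoids it with a one-line geometric observation. Writing $b'$ for the \emph{outward} normal to $e$ within the plane of $\Phi$, the paper notes that both $(a,b)$ and $(a',b')$ are orthonormal pairs in the $2$-plane through $m_e$ orthogonal to $e$, and that one is obtained from the other by a \emph{rotation} of that plane (this is what the figure is for). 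Hence $\langle a,b,\varepsilon' e\rangle$ and $\langle a',b',\varepsilon' e\rangle$ have the same handedness, and $\varepsilon'=\varepsilon(\Delta,F,v)$ drops out immediately. Your explicit-determinant plan would in effect be verifying this rotation claim by computation; the paper's phrasing isolates the geometric content and skips the dihedral bookkeeping you anticipate.

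One small slip to fix before you carry out your version: the boundary-orientation convention is outward-normal-first, so the incidence sign of $e$ in $\partial\Phi$ is read from $\langle b',\varepsilon' e\rangle$ with $b'$ pointing \emph{out} of $\Phi$, not from the inward normal $n$ you wrote. With $n=-b'$ your stated criterion ``$\langle a',n,e\rangle$ positive'' gives the opposite sign. This does not affect your strategy, but it would flip the answer in your determinant computation if left uncorrected.
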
\label{embed}
\begin{proof}
 It suffices to show that for a normal triangle $D$ linking $v$, the boundary maps coincide. As the boundary in each case is the signed sum of the normal arcs bounded by $D$, it suffices to show that the sign of an arc $\alpha(F,v)$ in the two cases is equal.

In the chain complex $C_2(v)$, the boundary of $D$ is the sum of the edges oriented counterclockwise. This means that if $b'(\Phi,v)$ denotes the vector at the midpoint $m_e(F,v)$ of the edge $e=e(F,v)$ in the plane of $\Phi=\Phi(\Delta,v)$, normal to the edge $e$ and pointing outwards from $\Phi$ (see figure~\ref{fig:Delta}), then the coefficient of the $\alpha(F,v)$ in $\del_2(v)(D)$ is $\varepsilon'=\pm 1$ such that $\langle b'(\Phi,v),\varepsilon'e\rangle$ is positively oriented. By the choice of orientations, this is equivalent to the basis $\langle a'(\Phi,v),b'(\Phi,v),\varepsilon'e\rangle$ being positively oriented in $M$.

Observe that $\langle a'(\Phi,v),b'(\Phi,v),\varepsilon'e\rangle$ is an orthonormal basis that can be obtained by a rotation from $\langle a(F,v),b(F,v),\varepsilon'e\rangle$ (see figure~\ref{fig:Delta}). Hence $\langle a(F,v),b(F,v),\varepsilon'e\rangle$ is a positive basis. By the definition of $\varepsilon(\Delta,F,v)$, it follows that $\varepsilon'=\varepsilon(\Delta,F,v)$. By the definition of $\del:\CC_2\to\CC_1$, it follows that the coefficient of $\alpha(F,v)$ in the boundary of $D$ in the two complexes coincides.

\end{proof}

We see next that the given orientations of the $2$-simplices of $C(v)$ are consistent, in the sense that their sum is a $2$-cycle, and hence the fundamental class in $H_2(S(v),\Z)$.

\begin{proposition}\label{fund}
If $D_j$ are the $2$-simplices in $C(v)$ with the above orientations, then
$$[S(v)]=\sum_j D_j$$
is a $2$-cycle.
\end{proposition}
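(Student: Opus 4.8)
The plan is to compute $\del_2$ directly on the chain $\sum_j D_j$ and check that every normal arc occurs with total coefficient zero. First I would recall the combinatorial picture: the $2$-simplices $D_j$ of $C(v)$ are exactly the normal triangles $D(\Delta,v)$ linking $v$, one for each tetrahedron $\Delta$ containing $v$, and the $1$-simplices of $C(v)$ are the normal arcs $\alpha(F,v)$ for faces $F$ containing $v$. By Proposition~\ref{embed} the boundary map $\del_2$ on $\CC_2$ restricts to $\del_2(v)$ on $C_2(v)$, and on the generator $D(\Delta,v)$ the coefficient of $\alpha(F,v)$ in $\del_2(D(\Delta,v))$ is the sign $\varepsilon(\Delta,F,v)$ whenever $F$ is a face of $\Delta$ containing $v$, and is $0$ otherwise.

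Next I would use the hypothesis that $M$ is a manifold away from its vertices. Since every interior point of a face $F$ containing $v$ is a non-vertex point, $F$ is contained in exactly two tetrahedra; as $F$ contains $v$, both of these tetrahedra contain $v$, and by definition these are $\Delta_+(F)$ and $\Delta_-(F)$. Hence a fixed normal arc $\alpha(F,v)$ lies in the boundary of exactly two of the $D_j$, namely $D(\Delta_+(F),v)$ and $D(\Delta_-(F),v)$.

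Then the computation is immediate: the coefficient of $\alpha(F,v)$ in $\del_2(\sum_j D_j)$ is $\varepsilon(\Delta_+(F),F,v)+\varepsilon(\Delta_-(F),F,v)$, which equals $(+1)+(-1)=0$ by the definition of $\Delta_\pm(F)$ together with the relation $\varepsilon(\Delta_1,F,v)=-\varepsilon(\Delta_2,F,v)$ established earlier. Since this holds for every normal arc, we get $\del_2(\sum_j D_j)=0$, so $\sum_j D_j$ is a $2$-cycle. Finally, because $S(v)$ is a closed connected orientable surface, $H_2(S(v);\Z)\cong\Z$ is generated by the fundamental class $[S(v)]$, and a cycle whose coefficient on every $2$-simplex is $\pm1$ must be $\pm[S(v)]$; the orientation convention on $\Phi(\Delta,v)$, with co-orientation pointing away from $v$, is exactly what pins down the sign, giving $[S(v)]=\sum_j D_j$.

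I do not expect a genuine obstacle: the statement is essentially a bookkeeping consequence of Proposition~\ref{embed} and the antisymmetry of $\varepsilon$. The one point that needs care is the appeal to the manifold hypothesis, to guarantee that each face meeting $v$ borders exactly two tetrahedra so that the two contributions cancel (rather than, say, three contributions summing to $\pm1$); under the standing assumptions on $M$ this is straightforward, and if one wished to allow self-gluings the same argument works once the arcs are counted with multiplicity.
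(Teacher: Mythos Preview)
Your argument is correct and follows essentially the same route as the paper: identify each normal arc $\alpha(F,v)$ as lying in the boundary of exactly the two triangles $D(\Delta_\pm(F),v)$, and then use the antisymmetry $\varepsilon(\Delta_+,F,v)=-\varepsilon(\Delta_-,F,v)$ together with Proposition~\ref{embed} to see the two contributions cancel. The paper phrases the cancellation step by citing Lemma~\ref{match} rather than appealing directly to the $\varepsilon$-relation, and it stops at showing the chain is a cycle (your final paragraph on the fundamental class is correct but not needed for the statement as written).
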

\begin{proof}
Each edge of $S(v)$, which is a normal arc $\alpha(F,v)$, is the boundary of exactly two $2$-simplices, $D_\pm\subset\Delta_\pm(F)$. Hence it suffices to show that the edge $\alpha(F,v)$ appears with opposite sign in the boundary of $D_\pm$. But we have seen in Lemma~\ref{match} that this is the case when $D_\pm$ are regarded as elements in $\CC_2$. By Proposition~\ref{embed}, the boundary map on $C_2(v)$ is the restriction of the map on $\CC_2$, so the coeffcients of $\alpha(F, v)$ in $\del_2(v) D_\pm$ have opposite signs,  as required.
\end{proof}

\section{Quadrilateral co-ordinates}

We now turn to the question regarding quadrilateral co-ordinates
determining normal surfaces. Quadrilateral co-ordinates are in
bijective correspondence with chains $\zeta\in \CC_2^q$. We shall
henceforth consider such $2$-chains.

Note that admissibility is a condition determined by the quadrilateral
coordinates. We shall assume that $\zeta$ corresponds to non-negative,
admissible quadrilateral coordinates.

Corresponding to the decomposition $\CC_1=\bigoplus_{v\in V}
C_1(v)$, we define homomorphisms $\bdel_v:\CC_2\to C_1(v)$ as the
composition $\pi(v)\circ \del_2$ of the boundary map with the
projection onto $C_1(v)$. As $\CC_1=\bigoplus_{v\in V} C_1(v)$,
for $\xi\in\CC_2$, $\del_2(\xi)=0$ if and only if $\bdel_v(\xi)=0$
for all $v\in V$.

As $\CC_2=\CC_2^t\oplus \CC_2^q$, by Lemma~\ref{match} the 2-chain $\zeta$
corresponds to quadrilateral co-ordinates of a normal surface $F$ with
normal coordinates $\xi$ if and only if there is a $2$-chain
$\zeta'\in\CC^t_2$ with $\del_2 (\zeta+\zeta')=0$. In this case, the
normal co-ordinates of $F$ are $\xi=\zeta+\zeta'$.

We first give a necessary condition for $\zeta$ to correspond to the
quadrilateral coordinates of a normal surface.

\begin{theorem}\label{ideal}
There is a normal surface $F$ with quadrilateral coordinates
corresponding to $\zeta$ if and only if $\bdel_v{\zeta}\in C_1(v)$ is
a boundary in $C_*(v)$ for all $v\in V$.
\end{theorem}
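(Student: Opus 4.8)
The plan is to reduce the theorem to a vertex-by-vertex statement using the direct-sum decompositions of the chain groups of $\CC_*$, and then to recognise the resulting condition as the solvability of a linear equation inside $C_2(v)$.

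First I would record the decompositions that make the argument work. Since each normal arc links exactly one vertex and each normal triangle links exactly one vertex, we have $\CC_1 = \bigoplus_{v\in V} C_1(v)$ and $\CC_2^t = \bigoplus_{v\in V} C_2(v)$. By Proposition~\ref{embed}, for a $2$-chain $\zeta'_v\in C_2(v)$ we have $\del_2\zeta'_v = \del_2(v)\zeta'_v\in C_1(v)$, so $\bdel_w\zeta'_v = 0$ whenever $w\ne v$. Writing a general $\zeta'\in\CC_2^t$ as $\zeta' = \sum_{v\in V}\zeta'_v$ with $\zeta'_v\in C_2(v)$, this gives $\bdel_w(\zeta+\zeta') = \bdel_w\zeta + \del_2(w)\zeta'_w$ for every $w\in V$. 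Since $\del_2\eta = 0$ if and only if $\bdel_v\eta = 0$ for all $v$, the existence of a $2$-chain $\zeta'\in\CC_2^t$ with $\del_2(\zeta+\zeta')=0$ is equivalent to the existence, separately for each $v$, of a chain $\zeta'_v\in C_2(v)$ with $\del_2(v)\zeta'_v = -\bdel_v\zeta$. Such a $\zeta'_v$ exists precisely when $-\bdel_v\zeta$ lies in the image of $\del_2(v)\colon C_2(v)\to C_1(v)$, equivalently (the image being a subgroup) when $\bdel_v\zeta$ lies in that image, i.e., when $\bdel_v\zeta$ is a boundary in $C_*(v)$.

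Granting this, the ``only if'' direction is immediate: if $F$ is a normal surface with quadrilateral coordinates $\zeta$, then by Lemma~\ref{match} its normal coordinates equal $\zeta+\zeta'$ for some $\zeta'\in\CC_2^t$ with $\del_2(\zeta+\zeta')=0$, whence $\bdel_v\zeta = -\del_2(v)\zeta'_v$ is a boundary in $C_*(v)$ for each $v$. For the ``if'' direction, assume $\bdel_v\zeta$ is a boundary in $C_*(v)$ for every $v$; by the previous paragraph there is $\zeta'\in\CC_2^t$ with $\del_2(\zeta+\zeta')=0$, so $\zeta+\zeta'$ satisfies the matching equations by Lemma~\ref{match}.

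The step that is not a formal unwinding of definitions — and the one I expect to require the most care — is upgrading $\zeta+\zeta'$ to an \emph{honest} normal surface, i.e., one with non-negative, admissible coordinates, rather than just a solution of the matching equations. Admissibility is automatic, since it depends only on the quadrilateral coordinates, which are the given admissible $\zeta$. For non-negativity I would invoke Proposition~\ref{fund}: $[S(v)] = \sum_j D_j$ is a $2$-cycle in $C_*(v)$ with coefficient $+1$ on every $2$-simplex, and $S(v)$ is a closed surface, so its triangulation is finite; hence replacing $\zeta'_v$ by $\zeta'_v + n_v[S(v)]$ for a sufficiently large integer $n_v$ preserves $\del_2(v)\zeta'_v = -\bdel_v\zeta$ while making all coordinates of $\zeta'_v$ non-negative. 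With this choice, $\xi := \zeta + \sum_{v\in V}\zeta'_v$ has non-negative coordinates (its quadrilateral part being the non-negative $\zeta$), is admissible, and satisfies the matching equations; by the criterion recalled in the introduction, $\xi$ is therefore the normal coordinate vector of an embedded normal surface $F$, and by construction the quadrilateral coordinates of $F$ are exactly $\zeta$.
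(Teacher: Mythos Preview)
Your proof is correct and follows essentially the same route as the paper's: both reduce the global condition $\del_2(\zeta+\zeta')=0$ to the vertex-by-vertex equations $\del_2(v)\zeta'_v=-\bdel_v\zeta$ via the decompositions $\CC_2^t=\bigoplus_v C_2(v)$ and $\CC_1=\bigoplus_v C_1(v)$, and both invoke Proposition~\ref{fund} to add multiples of $[S(v)]$ so as to make the triangle coordinates non-negative. Your write-up is in fact a bit more explicit in justifying why $\bdel_w\zeta'_v=0$ for $w\ne v$ and why a single sufficiently large $n_v$ suffices (finiteness of the triangulation of $S(v)$), but the argument is the same.
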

\begin{proof}
First, assume that $\zeta$ corresponds to the quadrilateral
co-ordinates of a surface $F$. Then there is a $2$-chain
$\zeta'\in\CC^t_2$ with $\del
(\zeta+\zeta')=\del \zeta + \del\zeta'=0$. Hence for each vertex $v\in
V$, $\bdel_v \zeta + \bdel_v \zeta'=0$

As $\CC_2^t=\bigoplus C_2(v)$, we can write $\zeta'=\bigoplus_{v\in V}
\zeta'(v)$, $\zeta'(v)\in C_2(v)$. For each $v\in V$,
$\bdel_v \zeta'=\del_2(v) \zeta'(v)$ is a boundary in the complex
$C_*(v)$. Hence $\bdel_v \zeta = -\bdel_v \zeta'$ is also a boundary.

Conversely, if $\bdel_v \zeta$ is a boundary for each $v\in V$, then
there are $2$-chains $\zeta'(v)\in C_2(v)$ with
$\del_2(v) \zeta'(v) =-\bdel_v \zeta$. We claim that we can choose
$\zeta'(v)$ so that all the corresponding (triangle) coordinates are
non-negative. By Proposition~\ref{fund} the sum of the
triangles in $S(v)$ is a cycle $[S(v)]$. By replacing $\zeta'(v)$ by
$\zeta'(v)+k[S(v)]$, for $k$ sufficiently large, we can ensure that
all the co-ordinates are non-negative.

Let $\zeta'=\Sigma_{v\in V}\zeta'(v)\in \CC^t_2$. By construction
$\bdel_v(\zeta+\zeta')=0$ for all $v\in V$, and hence
$\del(\zeta+\zeta')=0$.

Let $\xi=\zeta+\zeta'$. By Lemma~\ref{match}, $\xi$ satisfies the
matching equations. Further, as $\zeta$ is assumed to correspond to
admissible, non-negative quadrilateral coordinates, and the
coordinates of $\zeta'(v)$ are non-negative triangular coordinates, $\xi$ is an admissible,
non-negative solution.
\end{proof}

\begin{remark}
When $\bdel_v{\zeta}$ is a cycle in $C_1(v)$ for all $v \in V$, then $\zeta$ corresponds to the quadrilateral coordinates of a spun-normal surface. The above theorem says that when $\bdel_v \zeta$ is in fact a boundary the spun-normal surface is compact, so that get a normal surface.
\end{remark}

In the important case where $M$ is a manifold, Theorem~\ref{ideal}
takes a particularly useful form.

\begin{corollary}
If $M$ is a manifold, $\zeta$ corresponds to quadrilateral coordinates
of a normal surface if and only if $\bdel_v(\zeta) \in C_1(v)$ is a cycle in $C_*(v)$ for all
$v \in V$.
\end{corollary}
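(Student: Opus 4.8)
The plan is to derive this immediately from Theorem~\ref{ideal}, the only new input being that for a manifold $M$ the vertex linking surface $S(v)$ is a $2$-sphere for every vertex $v$, so that for a $1$-chain in $C_*(v)$ the conditions ``is a cycle'' and ``is a boundary'' coincide.

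First I would recall that when $M$ is a genuine $3$-manifold (in particular a manifold at its vertices), the link of each vertex of the triangulation is a PL $2$-sphere. Since $S(v)$ is, by construction, the union of the normal triangles linking $v$ and is PL-homeomorphic to this vertex link, we get $S(v)\cong S^2$. Consequently the simplicial chain complex $C_*(v)$ computes $H_*(S^2;\Z)$; in particular $H_1(C_*(v))=0$, i.e.\ the kernel of $\del_1(v)\co C_1(v)\to C_0(v)$ equals the image of $\del_2(v)\co C_2(v)\to C_1(v)$.

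Next I would record the elementary consequence for a $1$-chain $\beta\in C_1(v)$: since $\del_1(v)\circ\del_2(v)=0$, every boundary in $C_*(v)$ is a cycle; and since $H_1(C_*(v))=0$, every cycle is a boundary. Applying this with $\beta=\bdel_v\zeta$ for each $v\in V$, the statement ``$\bdel_v\zeta$ is a cycle in $C_*(v)$ for all $v\in V$'' is equivalent to ``$\bdel_v\zeta$ is a boundary in $C_*(v)$ for all $v\in V$'', which by Theorem~\ref{ideal} is exactly the condition that $\zeta$ correspond to the quadrilateral coordinates of a normal surface. This completes the argument.

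There is essentially no obstacle beyond this observation; the only points deserving a sentence of care are the identification of $S(v)$ with the vertex link --- which is precisely where the manifold hypothesis is used, via the classical fact that vertex links in triangulated $3$-manifolds are spheres --- and the remark that $\del_1(v)$ here denotes the honest simplicial boundary of $S(v)$, so that ``$\bdel_v\zeta$ is a cycle in $C_*(v)$'' means $\bdel_v\zeta\in\ker\del_1(v)$, matching the hypothesis of the corollary.
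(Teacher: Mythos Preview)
Your argument is correct and is exactly the paper's approach: the paper's proof is the single line ``This follows from Theorem~\ref{ideal} as $H_1(S(v),\Z)=0$,'' and you have simply unpacked that observation in more detail.
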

\begin{proof}
This follows from Theorem~\ref{ideal} as $H_1(S(v),\Z)=0$.
\end{proof}

The class $\bdel_v(\zeta)$ is a cycle if and only if its boundary is
zero. This is a condition that is simple to check and also
conceptually very simple.

In the general case, we need to check whether $\bdel_v(\zeta)$ is a
cycle and represents the trivial homology class. The latter can be
checked, for instance, by evaluating on a basis of cohomology.

We now turn to Casson-Rubinstein-Tollefson's observation on uniqueness. The following is a
useful way to state the result.

\begin{theorem}[Casson-Rubinstein-Tollefson]
Let $\zeta$ be an admissible, non-negative set of quadrilateral
coordinates that can be represented by a normal surface. Then there is
a set of admissible, non-negative normal surface coordinates $\xi$
corresponding to $\zeta$ such that if $\xi'$ is another set of such
coordinates, then $\xi'=\xi+\sum_{v\in V}m_v[S(v)]$, with $m_v\geq 0$.
\end{theorem}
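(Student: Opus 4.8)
The plan is to realize the normal surfaces carrying a fixed admissible, non-negative quadrilateral vector $\zeta$ as a coset of an explicitly identified subgroup of $\CC_2^t$, and then single out the canonical minimal element of that coset. First I would recall the reformulation set up just before the statement: by Lemma~\ref{match}, $\zeta$ is the quadrilateral part of a normal surface with coordinates $\xi$ precisely when $\xi=\zeta+\zeta'$ for some $\zeta'\in\CC_2^t$ with $\del_2(\zeta+\zeta')=0$, i.e.\ $\del_2\zeta'=-\del_2\zeta$. If $\zeta'_1,\zeta'_2$ are two such chains then $\del_2(\zeta'_1-\zeta'_2)=0$, and conversely adding any element of $K:=\ker(\del_2|_{\CC_2^t})$ preserves the property; so the set of valid $\zeta'$ (nonempty by hypothesis) is a single coset of $K$.

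Next I would compute $K$. Using $\CC_2^t=\bigoplus_{v\in V}C_2(v)$ and $\CC_1=\bigoplus_{v\in V}C_1(v)$, together with Proposition~\ref{embed}, which says $\del_2$ carries $C_2(v)$ into $C_1(v)$ via $\del_2(v)$, the map $\del_2|_{\CC_2^t}$ respects these direct sum decompositions, so $K=\bigoplus_{v\in V}\ker\del_2(v)$, the group of $2$-cycles of the triangulated surface $S(v)$. Since $C_*(v)$ has no $3$-chains and $S(v)$ is a closed, connected, orientable surface, this group is $H_2(S(v);\Z)\cong\Z$, and by Proposition~\ref{fund} its generator is $[S(v)]=\sum_j D_j$, the sum of \emph{all} the triangles linking $v$, each with coefficient $1$, and with all other normal-disc coordinates zero. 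Therefore any two solutions $\xi,\xi'$ corresponding to $\zeta$ satisfy $\xi'=\xi+\sum_{v\in V}m_v[S(v)]$ with $m_v\in\Z$; this already gives the stated form of the indeterminacy, apart from the sign condition.

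Finally I would produce the distinguished $\xi$ and check $m_v\ge 0$. Start from any admissible, non-negative solution $\xi_0$ (one exists by hypothesis). For each $v$, let $c_v\ge 0$ be the minimum of the coordinates of $\xi_0$ over the triangles linking $v$, and set $\xi:=\xi_0-\sum_{v\in V}c_v[S(v)]$. Because $[S(v)]$ has coefficient exactly $1$ on each triangle linking $v$ and $0$ on every other generator, subtracting $c_v[S(v)]$ keeps all triangle coordinates $\ge 0$, forces at least one triangle coordinate at each $v$ to vanish, and does not change the quadrilateral coordinates $\zeta$ (hence preserves admissibility and non-negativity); so $\xi$ is again an admissible, non-negative solution, now with a zero triangle coordinate at every vertex. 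If $\xi'$ is any admissible, non-negative solution corresponding to $\zeta$, write $\xi'=\xi+\sum_{v\in V}m_v[S(v)]$ as above and evaluate both sides on a triangle linking a given $v$ whose $\xi$-coordinate is $0$: the left side is $\ge 0$ while the right side equals $m_v$, so $m_v\ge 0$, as required.

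The step I expect to be the real content, as opposed to soft bookkeeping, is the structural identification $K=\bigoplus_{v\in V}\Z\,[S(v)]$ combined with Proposition~\ref{fund}'s description of $[S(v)]$ as the ``all ones'' vector on the triangles linking $v$: it is exactly this that lets one translate the constraint ``every coordinate of $\xi'$ is non-negative'' into ``each $m_v\ge 0$'', and it is where the chain-complex formalism does the work. The construction of the minimal $\xi$ and the well-definedness remarks are then routine.
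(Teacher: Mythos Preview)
Your proof is correct and follows essentially the same approach as the paper: both identify the indeterminacy as $\bigoplus_{v\in V}\Z\,[S(v)]$ via $H_2(S(v);\Z)\cong\Z$, then subtract the minimum triangle coefficient at each vertex to produce a $\xi$ with a zero triangle coordinate at every $v$, from which $m_v\ge 0$ follows by evaluation. The only cosmetic difference is that the paper works vertex by vertex from the outset (invoking Theorem~\ref{ideal} to get each $\zeta'(v)$), whereas you first frame things globally as a coset of $K=\ker(\del_2|_{\CC_2^t})$ and then decompose; the content is identical.
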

\begin{proof}
By Theorem~\ref{ideal}, $\bdel_v \zeta$ is the boundary
of a $2$-chain $\zeta'(v)\in C_2(v)$. If $\zeta''(v)$ is another such
$2$-chain, then $\zeta'(v)-\zeta''(v)$ is a $2$-cycle, hence
represents an element of the homology $H_2(S(v),\Z)$. As
$H_2(S(v),\Z)=\Z$ and is generated by $[S(v)]$,
$\zeta''(v)=\zeta'(v)+m[S(v)]$.

Consider the coefficients of the triangles of $S(v)$ in $\zeta'(v)$ and
let $m$ be the smallest such coefficient. The chain $\zeta'(v)-m[S(v)]$
then has all coefficients non-negative and at least one coefficient
zero. Further, if we replace $\zeta'(v)$ by $\zeta'(v)-m[S(v)]$, we see
that for any non-negative chain $\zeta''(v)$ with $\del(v) \zeta''(v)=\del(v)
\zeta'(v)$, $\zeta''(v)=\zeta'(v)+m'[S(v)]$ with $m'\geq 0$.

Now let $\zeta'=\sum_{v\in V} \zeta'(v)$ and let
$\xi=\zeta+\zeta'$. It is easy to see that $\xi$ is as claimed.
\end{proof}

Let $S$ be a normal surface, and let $(S)$ denote its quadrilateral coordinates. Then the above theorem says that there exists a normal surface $F$ with $(F) = (S)$ such that if $F'$ is any other normal surface with $(F') = (S)$ then $F'$ is the union of $F$ with some vertex-linking surfaces.

\acknowledgements{Tejas Kalelkar acknowledges the SPM Fellowship of the Council of Scientific and Industrial Research for financial support.}

\bibliographystyle{amsplain}

\end{document}